\documentclass[a4paper,11pt]{amsart}

 \usepackage{amssymb,amsmath,latexsym,amsthm}

\usepackage{graphicx}
\usepackage[utf8]{inputenc}

\usepackage[T1]{fontenc}

 \usepackage{url} 
\usepackage[colorlinks=true,urlcolor=blue,linkcolor=blue,citecolor=red]{hyperref}
 \usepackage{color}

 \usepackage{url} 
 \usepackage{hyperref}

\def\C{\mathbb{C}}
\def\R{\mathbb{R}}

\def\Z{\mathbb{Z}}

\def\SO{{\sf{SO}}}

\def\Iso{{\sf{Iso}} }

\def\\R{\mathcal R}
\def\R_c{{\mathcal{R}_{conf}}}
\def\Iso{{\mathcal{R}_{isom}}}

\newtheorem{theorem}{{Theorem}}[section]
\newtheorem{proposition}[theorem]{{Proposition}}
\newtheorem{isom.ext}[theorem]{{Trivial isometric extension}}
\newtheorem{definition}[theorem]{{Definition}}
\newtheorem{corollary}[theorem]{{Corollary}}
\newtheorem{example}[theorem]{{Example}}

\newtheorem{notation}[theorem]{{ Notation}}

\title{The $1$-parametric $h$-principle for smooth conformal immersions of surfaces}
\author{Alaa Boukholkhal}

\begin{document}
\begin{center}
\begin{abstract} 
We reformulate the problem of finding conformal immersions of closed Riemannian surfaces in the language of the $h$-principle and we prove that the inclusion from the space of smooth conformal immersions to the space of immersions induces a bijection on the sets of path connected components. 
\end{abstract}
\end{center}
\maketitle

\section{Introduction}
It is known since the time of Gauss, that conformal structures on an orientable surface are in one to one correspondence with complex structures (or Riemann surface structures) on it. It is therefore, natural to ask whether any Riemann surface admits an embedding (or an immersion) in the Euclidean space $\mathbb{E}^{3}$ or in any Riemannian manifold in general. Since the work of Nash \cite{nash1954c1} on isometric embeddings, an affirmative answer to this question was given by Garsia \cite{garsia1960imbedding}, R\"uedy \cite{zbMATH03355435} and Ko \cite{ko1989embedding} (for orientable Riemannian ambients). 

The theorem of Nash-Kuiper \cite{nash1954c1} \cite{kuiper1955c1}, is one of the main inspirations of Gromov's $h$-principle theory. The $h$-principle can be defined as studying differential relations (equations and inequalities in the partial derivatives) via homotopy methods. Another famous example of an $h$-principle is the work of Smale-Hirsh \cite{smale}, \cite{hirsch} on regular homotopy of immersions that describes the topology of the space of immersions. 

In the following, we address the classification of smooth conformal immersions of surfaces in the $h$-principle framework, recall that
\begin{definition}
Let $(\Sigma,g)$ and $(M,h)$ be two Riemannian manifolds. An immersion $f:\Sigma \rightarrow M$ is conformal if the induced metric $f^*h=\lambda g$ for some positive function $\lambda$ on $\Sigma$.
\end{definition}

We will establish a $1$-parametric $h$-principle for the relation of smooth conformal immersions of surfaces. In other words, we classify the  path connected components of the space of such immersions. More precisely, we will show that:

\begin{theorem}\label{T}
Let $(\Sigma,g)$ be a closed orientable Riemannian surface, and let $(M,h)$ be a Riemannian manifold of dimension $\geq 3$. The inclusion from the space of smooth conformal immersions of $(\Sigma,g)$ in $M$, to the space of immersions of $\Sigma$ in $M$ induces a bijection on the set of path-connected components. 

\end{theorem}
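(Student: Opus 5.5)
The plan is to pass through the formal (jet-level) space and prove two facts: first that conformal immersions satisfy a $1$-parametric $h$-principle, and second that the space of formal conformal immersions has the same path components as the space of formal immersions. Smale--Hirsch then identifies the latter with the path components of the space of immersions.

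\textbf{Setting up the relation.} Since $\Sigma$ is oriented of dimension $2$, a conformal structure is the same thing as a complex structure $J$ on $T\Sigma$, namely rotation by $\pi/2$ for $g$. A monomorphism $L:T_x\Sigma\to T_yM$ is conformal exactly when $L(Jv)\perp L(v)$ and $|L(Jv)|=|L(v)|$. This defines a closed relation $\R_c\subset J^1(\Sigma,M)$ whose fibre over $(x,y)$ is $\{L\ \text{conformal}\}\cong \R_{>0}\times V_2(\R^n)/\SO(2)$. Pointwise, this fibre deformation retracts onto the isometric monomorphisms. The space of all monomorphisms likewise retracts, by Gram--Schmidt in the basis $(e,Je)$ for a $g$-orthonormal $e$, onto the conformal ones. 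The retraction is continuous and fibrewise, so the inclusion of formal conformal immersions into formal immersions is a homotopy equivalence of section spaces. Formal immersions are, by Smale--Hirsch, weakly equivalent to immersions. This reduces the theorem to showing that $\pi_0$ of the space of genuine conformal immersions maps bijectively to $\pi_0$ of its formal counterpart. Surjectivity is the $0$-parametric $h$-principle and injectivity is the $1$-parametric relative $h$-principle.

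\textbf{Existence and deformation step.} Surjectivity: given an immersion $f_0$, we want a conformal immersion regularly homotopic to it. Pull back $h$ to obtain $f_0^*h$, which is a metric on $\Sigma$. By uniformization there is a diffeomorphism isotopic to the identity, composed with a conformal factor, relating $f_0^*h$ to $g$ only if the conformal classes agree, which in general they do not. So instead I would use the results of Garsia, R\"uedy and Ko quoted above in their parametric, $C^0$-dense form. Concretely, I would proceed by convex integration, or by a Nash--Kuiper style iteration, on the underdetermined system $f^*h=\lambda g$. Write this as the isometric relation for the metric $\lambda g$ with unknown $\lambda$. Choose $\lambda$ large enough that $\lambda g - f_0^*h>0$ (strictly short), and apply Nash--Kuiper $C^1$ to get a $C^1$ isometric immersion $C^0$-close to $f_0$ and regularly homotopic to it. Since we need \emph{smooth} maps, we exploit the extra freedom in $\lambda$. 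The conformal relation in dimension $2$ is a first-order relation which, unlike the isometric one, is open in the sense of being \emph{ample in the coordinate directions} after fixing $\lambda$ as a free variable. I would verify ampleness directly: in the fibre over a codirection, the set of admissible $\partial_1 f$ given $\partial_2 f$ is a circle $\{|u|=|\partial_2 f|,\,u\perp\partial_2 f\}$ in the normal plane of dimension $n-1\ge 2$ orthogonal to $\partial_2 f$. Scaling over $\lambda$ gives a cone whose path components have convex hull the whole complement, provided $n\geq 3$. Gromov's convex integration theorem for ample relations then yields the full parametric $C^0$-dense $h$-principle with smooth solutions, which gives both surjectivity and injectivity.

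\textbf{Main obstacle.} The hard point is the ampleness verification. Since $\lambda$ is not a jet variable, the relation $\{L: L^*h\in\R_{>0}g\}$ must be rewritten in terms of the principal subspaces $\{L: L|_{\ker\ell}\text{ fixed}\}$. Fix $L(e_2)=w\neq 0$; the admissible $L(e_1)$ form the circle described above in the plane case, or in general the sphere $\{u\perp w,\ |u|=|w|\}$ of dimension $n-2\geq 1$. For $n\ge 4$ this sphere is connected with convex hull a ball, but not the whole affine space, so ampleness fails in the strict sense. I expect to fix this by working instead with the relation in the pair $(f,\lambda)$ where $\lambda$ is not differentiated. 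Alternatively, I would use Gromov's result for \emph{directed} immersions, treating conformality as requiring the Gauss map $\Sigma\to \mathrm{Gr}^+_2(TM)$ to be compatible with $J$; that relation is open. If this does not close up, the fallback is a Nash--Kuiper iteration with the corrugation steps performed in the conformal class, with smoothness recovered from the elliptic (Beltrami-type) regularity of conformal maps.
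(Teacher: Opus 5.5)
\textbf{There is a genuine gap.} Your reduction to the formal level is sound. Conformal monomorphisms are a fibrewise deformation retract of all monomorphisms, so together with Smale--Hirsch everything reduces to an $h$-principle for \emph{smooth} conformal immersions. One small repair: Gram--Schmidt in a frame $(e,Je)$ depends on the choice of $e$, and no global unit field $e$ exists unless the genus is $1$. Use instead the canonical splitting of $L$ into its complex-linear and antilinear parts, with respect to $J$ and the rotation of the oriented image plane, and shrink the antilinear part.

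The gap is the step that is supposed to produce smooth conformal immersions and smooth conformal homotopies. None of the routes you list does this.
\begin{itemize}
\item Gromov's theorem needs a relation that is open as well as ample, and $\mathcal{R}_{\text{conf}}$ is neither. It is cut out by the two equations $|L(e_1)|=|L(e_2)|$ and $\langle L(e_1),L(e_2)\rangle=0$. As you observe yourself, its slices are round spheres inside $w^{\perp}$, whose convex hulls have empty interior.
\item Making $\lambda$ an undifferentiated unknown changes nothing. Within a slice $\lambda=|w|^2$ is already fixed, and the relation is still given by equations.
\item Nash--Kuiper applied to $\lambda g$ gives only $C^1$ maps.
\item The elliptic-regularity fallback is false when $\dim M\geq 3$. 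The system $f^*h=\lambda g$ is underdetermined, not elliptic. Concretely, a $C^1$ Nash--Kuiper isometric embedding of a flat torus in $\mathbb{R}^3$ is conformal but cannot be $C^2$, because a closed $C^2$ surface in $\mathbb{R}^3$ has a point where $\det\mathrm{II}>0$.
\item The directed-immersion reformulation does not apply. Conformality constrains the linear map, not its image plane: every oriented plane is the image of both conformal and non-conformal monomorphisms. The relation is also not open.
\end{itemize}

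The missing idea is to trade the closed relation for an open one, and to recover exactness from the finite dimensionality of $Teich(\Sigma)$.
\begin{enumerate}
\item Apply the parametric $\varepsilon$-isometric Nash--Kuiper theorem (Theorem~\ref{thm}), which does yield smooth maps. Apply it not to one metric but to a whole family of metrics $g$ parametrizing a closed ball $B_r$ about $[g_0]$ in $Teich(\Sigma)$.
\item For $\varepsilon$ small, the resulting smooth immersions $f_g$ satisfy $\|\pi(f_g^*h)-g\|\leq r$.
\item Proposition~\ref{prop} (Brouwer) then gives $\widetilde g$ with $\pi(f_{\widetilde g}^*h)=[g_0]$. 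That is, $f_{\widetilde g}^*h=\lambda\varphi^*g_0$ with $\varphi\in\mathcal{D}iff_\circ$.
\item So $f_{\widetilde g}\circ\varphi^{-1}$ is a smooth conformal immersion, $C^0$-close to and regularly homotopic to the original.
\end{enumerate}
Injectivity needs a version of this over $[0,1]\times B_r$ that produces a \emph{path} of good parameters, which Brouwer cannot supply. The paper uses Browder's fixed point theorem, analytic approximation, and path-connectedness of connected analytic sets.

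As it stands, your proposal proves the formal reduction but not the $h$-principle itself. Every route you offer ends at best with $C^1$ conformal maps, and the passage to $C^\infty$ is exactly what is missing.
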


Combining the classification of regular immersions in $\mathbb{R}^3$ in the Smale-Hirsh theorem \cite{smale}, \cite{hirsch} (see \cite{JAMES1966351} for more details), with theorem \ref{T}, we get: 

\begin{corollary}
Let $(\Sigma,g)$ be a closed orientable Riemannian surface with Euler characteristic $\chi$. Then, the space of smooth conformal immersions of $(\Sigma,g)$ in $\mathbb{E}^3$ has precisely $2^{2-\chi}$ path connected component.
\end{corollary}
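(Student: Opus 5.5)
The plan is to reduce the count of path components to a count of regular homotopy classes, and then evaluate the latter with the Smale--Hirsch theorem.

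The first step applies Theorem \ref{T} with $M=\mathbb{E}^3$, which has dimension $3$. It tells us that the inclusion of the space of smooth conformal immersions of $(\Sigma,g)$ into the space $\mathrm{Imm}(\Sigma,\mathbb{E}^3)$ induces a bijection on $\pi_0$. So it suffices to show that $\mathrm{Imm}(\Sigma,\mathbb{R}^3)$ has exactly $2^{2-\chi}$ path components. Path components of $\mathrm{Imm}(\Sigma,\mathbb{R}^3)$, with its $C^\infty$ (or $C^1$) topology, are exactly regular homotopy classes. Indeed, a continuous path of immersions is a regular homotopy. Conversely, a regular homotopy can be smoothed in the time parameter without leaving the open set of immersions, so it gives a path. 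Hence we are reduced to counting regular homotopy classes of immersions $\Sigma\to\mathbb{R}^3$.

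The second step is the Smale--Hirsch theorem. It identifies regular homotopy classes with homotopy classes of formal immersions, that is, fibrewise injective bundle maps $T\Sigma\to T\mathbb{R}^3$. Since $\Sigma$ is orientable and of codimension one, a formal immersion is the same as a bundle isomorphism $T\Sigma\oplus\varepsilon^1\to\varepsilon^3$, where the extra trivial line is the oriented normal. The set of such maps is nonempty, because $\Sigma$ embeds in $\mathbb{R}^3$. Any two such maps differ by a map $\Sigma\to SO(3)$, so the set of homotopy classes is a torsor over $[\Sigma,SO(3)]$. To compute this set, I will use obstruction theory on the CW structure consisting of one $0$-cell, $2\mathrm{g}$ one-cells and one $2$-cell, where $\mathrm{g}$ is the genus. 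Because $SO(3)$ is connected, $\pi_1(SO(3))=\mathbb{Z}/2$ and $\pi_2(SO(3))=0$, we get $[\Sigma,SO(3)]\cong H^1(\Sigma;\mathbb{Z}/2)$. Alternatively one can simply cite the computation in \cite{JAMES1966351}, which states exactly that regular homotopy classes of immersions of an orientable closed surface in $\mathbb{R}^3$ are in bijection with $H^1(\Sigma;\mathbb{Z}/2)$.

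Finally, $H^1(\Sigma;\mathbb{Z}/2)\cong(\mathbb{Z}/2)^{2\mathrm{g}}$ and $\chi=2-2\mathrm{g}$, so the number of classes is $2^{2\mathrm{g}}=2^{2-\chi}$.

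The only delicate point is the passage from $[\Sigma,SO(3)]$ to $H^1(\Sigma;\mathbb{Z}/2)$. One must check that this identification is a genuine bijection, not merely a map between the two sets. I would argue it as follows. A map $\Sigma\to SO(3)$ restricted to the $1$-skeleton is determined up to homotopy by its class in $H^1(\Sigma;\pi_1SO(3))$. It extends over the $2$-cell, since the obstruction lies in $H^2(\Sigma;\pi_1SO(3))$ and is killed by the relation $\prod[a_i,b_i]$ in the abelian group $\pi_1$. Any two extensions are homotopic because $\pi_2(SO(3))=0$. Equivalently, $SO(3)\to K(\mathbb{Z}/2,1)=\mathbb{RP}^\infty$ is $3$-connected, so it induces a bijection on homotopy classes from a $2$-complex. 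This last formulation is the cleanest to write down.
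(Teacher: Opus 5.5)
Your proof is correct and follows the same route as the paper. Theorem \ref{T} reduces the count to $\pi_0(\mathrm{Imm}(\Sigma,\mathbb{R}^3))$, and Smale--Hirsch, using that $T\Sigma\oplus\varepsilon^1$ is trivial, identifies this set with $[\Sigma,SO(3)]$. The only difference is that you carry out the ``little more work'' the paper leaves to its references: the $3$-connected map $SO(3)\cong\mathbb{RP}^3\to\mathbb{RP}^\infty$ gives $[\Sigma,SO(3)]\cong H^1(\Sigma;\mathbb{Z}/2)\cong(\mathbb{Z}/2)^{2-\chi}$, and that step is valid.
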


Except from tori, there is still no explicit construction for conformal models of higher genus surfaces in $\mathbb{R}^3$, a recent
functional approach was proposed by Pinkall and al \cite{Pinkalfindingconf} to construct models in each connected component.

\section{The $h$-principle: an overview}
Let $M$ and $N$ be two manifolds and consider the $1$-jet space
$$J^{1}(M,N)=\{(x,y,L)\; | \; x\in M, y\in N, L\in\mathcal{L}(T_x M, T_y N)\}$$
where $\mathcal{L}(T_x M, T_y N)$ is the space of linear maps from $T_x M$ to $T_y N$. We have a natural projection $\pi :J^{1}(M,N) \longrightarrow M$. A subset $\mathcal{R}\subset J^1(M,N)$ is called a \textbf{differential relation}.

\begin{example} Let $g$ (respectively $h$) be a Riemannian metric on $M$ (respectively $N$), we define the relation of isometric immersions by
    $$\mathcal{R}_{isom}(g)=\{ (x,y,L)\in J^{1}(M,N); \:\;\; L^*h=g\} $$
\end{example}
    Let $\mathcal{R}$ be a differential relation. A section $\sigma : x\in M \rightarrow (x,F(x),L(x)) \in \mathcal{R}\subset J^{1}(M,N)$ of the fiber bundle $\pi :J^{1}(M,N) \longrightarrow M$ is called a formal solution, we denote by $bs(\sigma)$ the map $F$ and we call it the \textbf{base map} of the $1$-jet $\sigma$. If in addition, we have  $$ D_x F = L(x) , \;\;\;\; \; \forall x\in M$$
we call $\sigma$ a holonomic solution of $\mathcal{R}$. 

We denote by $\Gamma(\\R)$ (respectively $Sol(\\R)$) the space of formal (respectively holonomic) solutions of $\\R$. 
\begin{definition}[Gromov \cite{gromov1986}]
    We say that a differential relation $\mathcal{R}$  satisfies 
    \begin{itemize}
        \item \textbf{The $h$-principle}, if for any formal solution $\sigma$, there exists a homotopy $\sigma_t $ of formal solutions, such that $\sigma_0=\sigma$ and $\sigma_1$ is a holonomic solution.
        \item \textbf{The $C^{0}$-dense $h$-principle}, if for any formal solution $\sigma$, and any neighborhood $U$ of $bs (\sigma)$, there exists a homotopy $\sigma_t $ of formal solutions, such that $\sigma_0=\sigma$, $\sigma_1$ is a holonomic solution and $bs(\sigma_t) \in U$ for all $t$.
         \item \textbf{The $1$-parametric $h$-principle}, if the inclusion $\iota : Sol(\\R) \rightarrow \Gamma(\\R)$ induces a bijection $\pi_0(Sol(\\R)) \rightarrow \pi_0(\Gamma(\\R))$, where $\pi_0(.)$ denotes the set of path-connected components. .
        \item \textbf{The full $h$-principle}, if the inclusion $\iota : Sol(\\R) \rightarrow \Gamma(\\R)$ is a weak homotopy equivalence.
    \end{itemize} 
\end{definition}

We introduce now a type of differential relations that satisfy the $h$-principle.  
\begin{definition}
    Let $p=(x,y,L)\in \\R$ and let $H\subset T_xM$ be a hyperplane, we define the slice of $\\R$ for $H$ over $p$ by:
    $$\\R_{H,p}=\{ \widetilde{L}\in \mathcal{L}(T_xM,T_yN), \;\; \widetilde{L}|_H=L|_H \; \text{and} \; (x,y,\widetilde{L})\in \\R \}$$
\end{definition}
\begin{definition}
    A differential relation $\\R$ is $ample$ if for each slice, either it is empty or the convex hull of each path-component of the slice is the entire fiber, i.e. for any  $p=(x,y,L)\in \\R$ and any hyperplane $H\subset T_xM$, we have 
        $$IntConv(\\R_{H,p})=\{\widetilde{L}\in \mathcal{L}(T_xM,T_yN), \;\; \widetilde{L}|_H=L|_H \}$$
        where by $IntConv(\\R_{H,p})$ we mean the interior of the convex hull of the path component of $\\R_{H,p}$ containing $L$.
    
\end{definition}

\begin{theorem}[Gromov \cite{gromov1986}]
    Let $\\R\subset J^{1}(M,N)$ be an open and ample differential relation. Then, $\\R$ satisfies the full $h$-principle.
\end{theorem}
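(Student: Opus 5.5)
The plan is to prove this by Gromov's convex integration: reduce to a parametric, relative, local statement in a cube and treat one coordinate direction at a time. The full $h$-principle means $\pi_k(\Gamma(\\R),Sol(\\R))=0$ for all $k$. So it suffices to prove the following \emph{parametric relative} statement. Let $\sigma_p$, $p\in D^k$, be a family of formal solutions that are holonomic for $p\in\partial D^k$ and holonomic near a closed set $A\subset M$. Then the family can be homotoped, rel $\partial D^k$ and rel $A$, through formal solutions to a family of holonomic solutions, with base maps staying $C^0$-close.

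\emph{Localization.} First I would triangulate $M$ finely, so that each simplex and its relevant neighbourhood lies in a chart of $M$ whose image in $N$ sits in a chart. By induction over the skeleta, using that $\\R$ is open (hence a sheaf-type condition, and small perturbations stay in $\\R$), the statement reduces to a cube $I^n\subset\R^n$ with target $\R^q$, relative to a neighbourhood of $\partial I^n$. In the cube I would correct the derivative one coordinate direction at a time. Write $L=(L_1,\dots,L_n)$ with $L_i=L(\partial_{x_i})$. Suppose $F$ has $\partial_{x_j}F\approx L_j$ for $j<i$. Then change $F$ so that also $\partial_{x_i}F\approx L_i$, while keeping the earlier derivatives almost unchanged and $(x,F,\partial F)$ inside $\\R$. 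Openness of $\\R$ absorbs the small errors at each of the $n$ steps.

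\emph{The one-dimensional step.} Fix $i$ and a point $x$, and let $H=\mathrm{span}\{\partial_{x_j}:j\ne i\}$. The slice $\\R_{H,(x,F,\widetilde L)}$ consists of the admissible values of the $i$-th column, where $\widetilde L$ has the current holonomic columns in $H$. By ampleness, the path component containing the current value $\partial_{x_i}F(x)$ has convex hull whose interior is everything. In particular the target $L_i(x)$ lies in that interior.

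The key lemma is then as follows. There is a family of loops $\gamma_{x,p}:\R/\Z\to$ (path component of the slice), depending continuously on $(x,p)$ and on the auxiliary data, such that $\int_0^1\gamma_{x,p}(s)\,ds=L_i(x)$. The loops should also be constant equal to $\partial_{x_i}F$ where no correction is needed (near $\partial I^n$, $A$ and $\partial D^k$). Given this, define
$$G(x)=F(x_1,\dots,0,\dots)+\int_0^{x_i}\gamma_{(x_1,\dots,t,\dots,x_n),p}(Nt)\,dt,$$
corrected by a cutoff. As $N\to\infty$ the difference $G-\int L_i$ tends to $0$ in $C^0$, and $\partial_{x_i}G=\gamma(Nx_i)$ lies in the slice. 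The remaining derivatives $\partial_{x_j}G$ differ from the old ones by $O(1/N)$. The homotopy of formal solutions is the obvious linear one in $L$, combined with shrinking oscillation, which stays in $\\R$ by openness.

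\emph{Expected main obstacle.} The hard part is constructing the loops $\gamma$ \emph{continuously in parameters and relatively}, with prescribed barycenter. Pointwise this follows from ampleness: pick finitely many points of the path component whose convex hull contains $L_i$ in its interior, and join them by paths. To do it in families, I would first cover the parameter space by finitely many small sets on which a single such configuration works, since the interior-of-hull condition is open. I would then glue the configurations using a partition of unity acting on the weights and time-reparametrizations of the loops. Finally I would deform near the relative sets, where the loop is constant, using that the constant loop at a point in the interior of the hull can be connected to the others within the path component. This "parametric barycenter lemma" (Gromov's integral representation) is where ampleness and path-connectedness of the slice components are really used.
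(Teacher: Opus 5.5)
The paper gives no proof of this theorem; it is quoted from Gromov. So the comparison is with the standard convex-integration proof. You reproduce its architecture correctly: parametric relative reduction, localization to a cube, one coordinate direction at a time. The gap is in the one-dimensional step, where you have interchanged the roles of the formal and the actual derivative, and as written it fails.

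Since $\sigma$ is only formal, the actual jet $(x,F,dF)$ is in general not in $\mathcal{R}$. The point of $\mathcal{R}$ over which the slice must be taken is the partially holonomic jet with columns $\partial_{x_1}F,\ldots,\partial_{x_{i-1}}F$, then $L_i$, then the still formal $L_{i+1},\ldots,L_n$. The actual derivative $\partial_{x_i}F(x)$ need not lie in that slice at all. So ``the path component containing $\partial_{x_i}F(x)$'' is undefined, and ampleness says nothing about it. For immersions of an interval into $\mathbb{R}^2$ with constant base map $F$, the slice is $\mathbb{R}^2\setminus\{0\}$ and $\partial_xF=0$ is not in it.

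Even granting such loops, prescribing their barycenter to be $L_i$ breaks everything downstream:
\begin{itemize}
\item Your $G$ converges to $F(\ldots,0,\ldots)+\int_0^{x_i}L_i\,dt$. This differs from $F$ by $\int_0^{x_i}(L_i-\partial_{x_i}F)\,dt$, which is of order one, so the base map is not $C^0$-close and may leave the chart.
\item The mismatch with $F$ at $x_i=1$ is of order one. A cutoff correction therefore introduces an order-one error in $\partial_{x_i}G$ that openness cannot absorb.
\item $\partial_{x_j}G-\partial_{x_j}F$ tends to $\int_0^{x_i}\partial_{x_j}(L_i-\partial_{x_i}F)\,dt$ rather than being $O(1/N)$, which destroys the earlier steps.
\end{itemize}
Your inductive hypothesis $\partial_{x_j}F\approx L_j$ cannot be propagated either. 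A differential cannot be made arbitrarily $C^0$-close to a non-closed $L$, and after your own step $\partial_{x_i}G=\gamma(Nx_i)$ oscillates.

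The correct step is the reverse. Let $\Omega_x$ be the path component, through the formal column $L_i(x)$, of the slice over the partially holonomic jet above. Ampleness says the interior of the convex hull of $\Omega_x$ is the whole fiber, so it contains the uncontrolled actual derivative $\partial_{x_i}F(x)$. This is exactly why ampleness is the right hypothesis.

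Choose loops $\gamma_x$ in $\Omega_x$, based at $L_i(x)$, with $\int_0^1\gamma_x(s)\,ds=\partial_{x_i}F(x)$. Take them constant near the relative sets, where $L_i=\partial_{x_i}F$. Set $\widetilde F(x)=F(x)+\tfrac1N\Gamma_x(Nx_i)$, where $\Gamma_x(u)=\int_0^u(\gamma_x(s)-\partial_{x_i}F(x))\,ds$ is $1$-periodic because of the barycenter condition. Then the following are all $O(1/N)$:
\begin{itemize}
\item $\widetilde F-F$;
\item $\partial_{x_j}\widetilde F-\partial_{x_j}F$ for $j\neq i$;
\item $\partial_{x_i}\widetilde F-\gamma_x(Nx_i)$.
\end{itemize}
Also $\widetilde F=F$ near the relative sets. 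By openness, the jet with holonomic columns $\partial_{x_1}\widetilde F,\ldots,\partial_{x_i}\widetilde F$ and formal columns $L_{i+1},\ldots,L_n$ lies in $\mathcal{R}$. That membership, not $dF\approx L$, is the invariant to carry along. Your parametric barycenter lemma is fine once its data are corrected to ``base point $L_i$, barycenter $\partial_{x_i}F$''.

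Finally, the homotopy of formal solutions cannot be linear in $L$, since $\mathcal{R}$ is not convex. For immersed curves, the segment from $L_i(x)$ to $\gamma_x(Nx_i)$ may pass through $0$. The homotopy has to run inside $\Omega_x$ along the loop. For instance, take $i$-th column $\gamma_x(sNx_i)$ and base map $F+\tfrac{s}{N}\Gamma_x(Nx_i)$ for $s\in[0,1]$, followed by an $O(1/N)$ adjustment that openness absorbs.
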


We now apply this machinery to recover the theorem of Smale-Hirsh \cite{smale}. We define the relation of immersions as follows:    $$\mathcal{R}_{imm}=\{ (x,y,L)\in J^{1}(M,N); \:\;\; L \text{ has maximal rank }\} $$
For $p=(x,y,L)\in \mathcal{R}_{imm}$, $H\subset T_xM$ a hyperplane and $v\in T_xM\setminus H$, the slice of $\mathcal{R}_{imm}$ for $H$ is given by: 
\begin{align*}
    \mathcal{R}_{imm}(p,H)=& \{ \widetilde{L}\in \mathcal{L}(T_xM,T_yN), \;\; \widetilde{L}|_H=L|_H \; \text{and} \; \widetilde{L} \text{ has maximal rank }\}\\
    =& \{ \widetilde{L}\in \mathcal{L}(T_xM,T_yN), \;\; \widetilde{L}|_H=L|_H \; \text{and} \; \widetilde{L}(v)\notin L(H)\}
 \end{align*}
For $dim M< dim N$, it not hard to see that $ \mathcal{R}_{imm}$ is ample. Moreover, $ \mathcal{R}_{imm}$ is clearly open. Therefore, we get: 
\begin{corollary}[Gromov \cite{gromov1986}] \label{h}
    Let $M^m$ and $N^n$ be two manifolds such that $m<n$. Then, the relation of immersions $\\R_{imm}$ satisfies the full $h$-principle.
\end{corollary}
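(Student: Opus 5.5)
The plan is to deduce Corollary \ref{h} from the preceding theorem of Gromov on open and ample relations. Openness of $\mathcal{R}_{imm}$ is immediate: maximal rank is an open condition on $L$ (some $m\times m$ minor is nonzero, in local trivializations of $J^1(M,N)$). So the real task is to check ampleness of $\mathcal{R}_{imm}$ when $m<n$, and then quote the theorem.

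Fix $p=(x,y,L)\in\mathcal{R}_{imm}$, a hyperplane $H\subset T_xM$ and $v\in T_xM\setminus H$. An element $\widetilde L$ of the affine fiber $\{\widetilde L : \widetilde L|_H=L|_H\}$ is determined by $w=\widetilde L(v)\in T_yN$, which identifies the fiber with $T_yN\cong\R^n$. If $L|_H$ does not have rank $m-1$, the slice is empty and there is nothing to prove. Otherwise $L(H)$ is a linear subspace of dimension $m-1$, and the slice is exactly $T_yN\setminus L(H)$, as in the displayed description.

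The key observation is that $L(H)$ has codimension $n-m+1\ge 2$ precisely because $m<n$. The complement of a linear subspace of codimension $\ge 2$ is path connected: given two points, one can join them by a path avoiding the subspace, for example by moving within the complement of the subspace in a $2$-plane transverse to it. Hence the path component of the slice containing $L$ is the whole slice $T_yN\setminus L(H)$.

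Its convex hull is all of $T_yN$. Any $z\in L(H)$ is the midpoint of $z+u$ and $z-u$ for any $u\notin L(H)$, and both of these points lie outside $L(H)$. The whole space is open, so the interior of the convex hull is the entire fiber, and ampleness holds. Gromov's theorem then gives the full $h$-principle.

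The main point to handle carefully is the codimension count. In the equidimensional case $m=n$ the complement of $L(H)$ is disconnected, and each of its two components has a half-space as convex hull, so ampleness fails there. This is exactly where the hypothesis $m<n$ is used.
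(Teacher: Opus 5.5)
Your proof is correct and follows the same route as the paper: openness of $\mathcal{R}_{imm}$ plus ampleness, then Gromov's theorem for open ample relations. The paper only asserts that ampleness is ``not hard to see'' for $m<n$, and your codimension count fills in exactly that step: $L(H)$ has codimension $n-m+1\ge 2$ in $T_yN$, so its complement is path connected and its convex hull is the whole fiber.
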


When $M$ is a closed orientable surface of genus $d$ and $N=\mathbb{R}^3$. Applying the corollary above, we get 
$$\pi_0(Imm(M,\mathbb{R}^3))= \pi_0(C^0(M,\SO(3)))$$
With a little more work, we get that the space of smooth immersions of $M$ in $\mathbb{R}^3$ has precisely $2^{2d}$ path connected component (see \cite{JAMES1966351}, \cite{PinkallsurfaImm} for a more details).

\section{Conformal immersions relation and Teichm\"uller space}
Fix now $(\Sigma, g)$ to be an orientable closed Riemannian surface, and let $(M,h)$ be a Riemannian manifold of dimension $\geq 3$.

We define the relation of conformal immersions for a metric $g$ on $\Sigma$ by 
$$\mathcal{R}_{\text{conf}}(g) =\{ (x,y,L)\in J^1(\Sigma,M)\; | \; \exists \lambda_{x} >0 , \; L^{*}h = \lambda_{x} g \} $$

 Finding a conformal immersion $f : (\Sigma , g) \rightarrow (M,h)$ is equivalent to finding a holonomic solution for the relation $\mathcal{R}_{\text{conf}}(g)$. This relation contains the isometric immersion relation (for any metric conformal to $g$), which is known to satisfy the $h$-principle (see \cite{gromov1986}), in the $C^{1}$-setting. We will need more work 
to find smooth solutions.

\begin{notation}
     Denote by $Sol_{C^\infty}(\\R_{\text{conf}}(g))$ the set of holonomic solutions of $\\R_{\text{conf}}(g)$ whose base map is smooth; equivalently, the set of smooth conformal immersions.
 \end{notation}

The important particularity in dimension two, is the correspondence between conformal structures and complex structures, which in this setting gives rise to the Teichm\"uller space $Teich(\Sigma)$. We will focus on the case of positive genus surfaces since all metrics are conformal on the sphere by the uniformization theorem and hence the space of conformal immersions identifies with the space of immersions in this case.

Let $\Sigma$ be a closed orientable surface of genus $d\geq 1$ and denote by: 
\begin{itemize}
    \item $\mathcal{M}et(\Sigma)$ the set of Riemannian metrics on $\Sigma$.
    \item $\mathcal{D}iff$ the group of orientation preserving diffeomorphisms of $\Sigma$.
    \item  $\mathcal{D}iff_\circ$ the connected component of the identity in $\mathcal{D}iff$.
    \item $\mathcal{P}$ the group of smooth positive functions $\lambda: \Sigma \rightarrow \mathbb{R}_{>0}$ on $\Sigma$.
    
\end{itemize}
We consider the following actions 
\begin{itemize}
    \item  Pointwise conformally equivalent metrics : 
    $$ \mathcal{P} \times\mathcal{M}et(\Sigma) \rightarrow\mathcal{M}et(\Sigma)$$
    $$(\lambda, g) \mapsto \lambda . g $$
    \item Conformally equivalent metrics up to isotopy : 
    $$\mathcal{M}et(\Sigma)/\mathcal{P} \times \mathcal{D}iff_\circ\rightarrow\mathcal{M}et(\Sigma)/\mathcal{P}$$
    $$ ([g], \varphi ) \mapsto [\varphi^*g]$$
\end{itemize}

\begin{definition}
The quotient space $\frac{\mathcal{M}et(\Sigma)/\mathcal{P}}{\mathcal{D}iff_\circ} $ is called the Teichm\"uller space and we denote it by $Teich(\Sigma)$.

\end{definition}

It is worth noting that the flexibility of smooth conformal immersions of surfaces in theorem \ref{T} is due to the fact that the Teichm\"uller space is finite dimensional (of dim $2$ in the genus $1$ case and of dimension $6d-6$ for genus $d\geq 2$). In contrast, in higher dimensions the space of conformal structures is infinite-dimensional (see \cite{Ebinconformal}) which can explain the lack of flexibility in that setting (see \cite{Chern-Simons}). 

In addition, $Teich(\Sigma)$ can be locally parametrized by Riemannian metrics (see \cite{fischer1984purely} for genus $\geq 2$). Moreover, when $\Sigma$ has genus $1$, we can find a global parametrization by Riemannian metrics. More precisely, on $\C/\Z^{2}$, the family of metrics $g_w=|dz+\frac{i-w}{i+w} d\Bar{z}|^2$ with $w\in \mathbb{H}^{2}$ parameterize $Teich(\Sigma)$. 

For more details about Teichm\"uller theory, see \cite{hubbard2016teichmuller} or \cite{farb2011primer}. 

\section{Proof of the main theorem}

Let $(\Sigma,g_0)$ be an orientable Riemmanian surface and $(M,h)$ a Riemannian manifold. Our goal is to prove that the inclusion map 
$$Sol_{C^\infty}(\\R_{\text{conf}}(g_0)) \hookrightarrow Imm(\Sigma,M)$$ induces a bijection on the $\pi_0$ level, where $Sol_{C^\infty}(\\R_{\text{conf}}(g))$ is the space of smooth conformal immersions of $(\Sigma,g_0)$ in $(M,h)$ and $Imm(\Sigma,M)$ is the space of smooth immersions. 

We will first prove the surjectivity, which can be reformulated as:
\begin{proposition}\label{surj}
The relation of smooth conformal immersions of $(\Sigma,g_0)$ in $(M,h)$ satisfies the $C^0$-dense $h$-principle.

\end{proposition}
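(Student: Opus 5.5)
The plan is to combine the $C^1$ $h$-principle for isometric immersions (Nash--Kuiper, in Gromov's $C^0$-dense and parametric form) with the finite dimensionality of $Teich(\Sigma)$. The $C^1$ theory produces conformal immersions only in $C^1$ regularity. Smoothing them costs a small error in the conformal class. That error lives in a finite-dimensional space, so I will correct it with a topological degree argument over a finite-dimensional family of metrics. For the sphere there is nothing to prove beyond Corollary \ref{h} and uniformization, so assume genus $d\geq 1$.

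\emph{Step 1 (reduction to isometric formal solutions).} Let $\sigma=(x,F(x),L(x))$ be a formal solution of $\R_c(g_0)$, with $L^*h=\lambda_x g_0$, and let $U$ be a neighborhood of $F$. I first homotope $\sigma$ within formal solutions so that $F$ is smooth and $\lambda$ is smooth and positive. I then rescale $L$ by a path $L_t=\rho_t L$, with $\rho_t$ positive functions, so that $L_1^*h=\varepsilon g_0$ for a small constant $\varepsilon$. Throughout, the base map does not move. By Corollary \ref{h} (in its $C^0$-dense form for the open ample relation $\R_{imm}$), $\sigma$ is moreover homotopic through formal immersions, inside $U$, to a smooth immersion $f$. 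I will keep the conformal formal data along this path via a Gram--Schmidt type retraction of $\R_{imm}$ onto $\R_c$; this retraction exists fiberwise because a rank two map is conformal to $g_0$ after composing with a canonical positive self-adjoint correction. After shrinking $\varepsilon$, $f$ is strictly short for $\varepsilon g$ for every metric $g$ in a fixed $C^0$-neighborhood of $g_0$.

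\emph{Step 2 (a finite-dimensional family).} Let $k=\dim Teich(\Sigma)$. Choose a smooth family of metrics $g_s$, $s\in \overline{B}\subset\R^k$ a closed ball, with $g_{0}$ the given metric, such that $s\mapsto[g_s]$ is a homeomorphism of $\overline B$ onto a neighborhood of $[g_0]$ in $Teich(\Sigma)$. In genus $1$ this is the explicit family $g_w$; in genus $\geq2$ it is the local parametrization of Fischer--Tromba. I apply the parametric $C^0$-dense Nash--Kuiper theorem to the short map $f$ and the family $\varepsilon g_s$. This gives a continuous family of $C^1$ isometric immersions $f_s:(\Sigma,\varepsilon g_s)\to(M,h)$ in $U$, each homotopic to $f$ through formal conformal solutions. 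Then I approximate the family $f_s$ in $C^1$, uniformly in $s$, by a continuous family of smooth immersions $F_s$. Each induced metric $F_s^*h$ is then $C^0$-close to $\varepsilon g_s$.

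\emph{Step 3 (degree argument and conclusion), the main obstacle.} Define $\Phi(s)=[F_s^*h]\in Teich(\Sigma)$, read in the chart $s\mapsto [g_s]$. I need two facts:
\begin{itemize}
\item the class $[F_s^*h]$ depends continuously on $F_s^*h$ in a topology controlled by the $C^1$ approximation;
\item $\Phi$ is uniformly close to the identity on $\partial\overline B$.
\end{itemize}
The delicate point is that $C^0$-closeness of metrics does not obviously give closeness of Teichm\"uller classes. To obtain it, I would either upgrade the approximation to $C^{1,\alpha}$ metrics, or use quasiconformal theory: a $C^0$-small change of metric gives a quasiconformal identity map with dilatation close to $1$, hence Teichm\"uller distance close to $0$. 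Granting this, Brouwer degree yields $s^*$ with $\Phi(s^*)=[g_0]$. This means $F_{s^*}^*h=\mu\,\varphi^*g_0$ for some $\varphi\in\mathcal{D}iff_\circ$ and some positive $\mu$, and then $F_{s^*}\circ\varphi^{-1}$ is a smooth conformal immersion of $(\Sigma,g_0)$.

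To stay inside $U$, I need $\varphi$ to be $C^0$-close to the identity. I would get this from a slice theorem: metrics whose conformal class is near $[g_0]$ can be written as $\mu\,\psi^*g_{s}$ with $\psi$ depending continuously on the metric and equal to the identity at $g_0$. Uniformization with normalized harmonic or quasiconformal maps provides such a $\psi$. Finally, I concatenate three paths: the formal path from $\sigma$ to $f$, then $f$ to $f_{s^*}$, then the straight-line path in $C^1$ from $f_{s^*}$ to $F_{s^*}$, followed by the isotopy from $\varphi$ to the identity. Composing each stage with the conformal retraction keeps it among formal solutions of $\R_c(g_0)$, and every base map stays in $U$.
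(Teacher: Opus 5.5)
Your proposal follows essentially the same route as the paper: parametrize a closed ball around $[g_0]$ in $Teich(\Sigma)$ by a continuous family of metrics, apply parametric Nash--Kuiper to get smooth immersions whose induced metrics are $C^0$-close to rescalings of those metrics, and apply Brouwer to the induced map into $Teich(\Sigma)$. You also justify several points the paper passes over: continuity of the Teichm\"uller class under $C^0$ perturbations via quasiconformal dilatation, the needed precomposition by $\varphi\in\mathcal{D}iff_\circ$ and its $C^0$-smallness, and the retraction that keeps the homotopy among formal conformal solutions.

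One slip needs fixing. A fixed immersion $f$ is short for $\varepsilon g$ only when the constant $\varepsilon$ is large, so you must enlarge it rather than shrink it, as the paper does with its constant $c$. This is harmless, because constant rescaling does not change conformal classes.
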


We will need two important ingredients: 
 
\begin{itemize}
    \item \textbf{The $\varepsilon$-isometric immersions relation:}  let $(\Sigma,g_0)$ and $(M,h)$ be as before, and consider for $\varepsilon >0$ the relation of $\varepsilon$-isometric immersions: 
$$\Iso(g_0,\varepsilon)=\{ (x,y,L)\in J^{1}(\Sigma,M); \:\;\; |L^*h-g_0| < \varepsilon \} $$ 
    The proof of Nash-Kuiper's isometric embedding theorem, in the language of differential relations, relies on proving the $h$-principle for the relation of $\varepsilon$-isometric immersions, for a well chosen family of metrics $(g_n)$ and parameters $(\varepsilon_n)$.
    \begin{definition}
        An immersion $f:(\Sigma,g_0)\rightarrow (M,h)$ is called short if $f^{*}h\leq g_0$.
    \end{definition}
    We have the following theorem \cite{nash1954c1}, \cite{kuiper1955c1}, \cite{gromov1986}: 
    \begin{theorem}\label{thm}

Let $\{f_p\}_{p\in B}$ be a continuous family of short immersions 
parametrized by a finite-dimensional ball $B$. 
Then for every $\varepsilon>0$, the family $\{f_p\}_{p\in B}$ can be deformed 
through immersions depending continuously on the parameter $p$ 
to a continuous family of $\varepsilon$-isometric immersions 
$\{\tilde f_p\}_{p\in B}$ which is arbitrarily $C^0$-close to the original family. 
Moreover, if the original family consists of smooth immersions, 
the homotopy may be chosen within the space of smooth immersions.
    \end{theorem}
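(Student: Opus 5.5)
The proof I would give is the standard stage-by-stage Nash--Kuiper construction, run with a parameter: every choice is made continuously in $p\in B$, and every error estimate is made uniform using compactness of $B\times\Sigma$.

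\textbf{Step 1: make the metric defect strictly positive.} Fix $\varepsilon>0$ and choose a slightly larger target metric $G=(1+\eta)g_0$, with $\eta>0$ so small that $|G-g_0|<\varepsilon/2$. Since each $f_p$ is short, the defect
$$D_p=G-f_p^*h$$
is positive definite on $\Sigma$. It depends continuously on $p$, and smoothly on $x$ when $f_p$ is smooth. By compactness of $B\times\Sigma$ there is a uniform constant $c>0$ with $D_p\geq c\,g_0$. The target then becomes: produce $\tilde f_p$ with $|\tilde f_p^*h-G|<\varepsilon/2$.

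\textbf{Step 2: split the defect into primitive metrics.} Cover $\Sigma$ by finitely many coordinate charts $U_j$ and take a partition of unity $\rho_j$. On each chart, fix finitely many linear forms $d\psi_{k}$ such that the rank-one metrics $d\psi_k^2$ span a neighborhood, in the open cone of positive quadratic forms, of the compact set $\{D_p(x)\}$. Using barycentric coordinates with respect to these fixed forms, write
$$D_p=\sum_{j,k}a_{jk,p}^2\,d\psi_{jk}^2.$$
The coefficients $a_{jk,p}\geq0$ are smooth in $x$ and continuous in $p$. The key point is that the forms are independent of $p$; only the coefficients vary.

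\textbf{Step 3: add one primitive metric at a time.} Order the terms and add them successively. At a given step the current map is $f=f_p$, and we add $a^2d\psi^2$, with $a=a_p$ supported in a chart $U$. Working in exponential coordinates of $h$ around $f(U)$, set
$$f+\tfrac{a}{\lambda}\bigl(\cos(\lambda\psi)\,\nu_1+\sin(\lambda\psi)\,\nu_2\bigr).$$
Here $\nu_1,\nu_2$ are unit fields normal to $f$. If the codimension is $1$, use instead Kuiper's corrugation in the span of $df(\partial_\psi)$ and a normal field $\nu$. The normal fields are chosen continuously in $p$; this is possible locally on $U$, and globally over $B$ because $B$ is contractible.

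A direct computation gives that the new induced metric equals $f^*h+a^2d\psi^2+O(1/\lambda)$, and that the $C^0$ displacement is $O(1/\lambda)$. Both constants are uniform in $p$, since all data are continuous on the compact set $B\times\overline U$. Taking $\lambda$ large, uniformly in $p$, makes the accumulated error over all finitely many steps less than $\varepsilon/2$ and the total $C^0$ displacement as small as desired.

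\textbf{Step 4: the homotopy and regularity.} To get a deformation through immersions, replace the amplitude $a$ by $ta$, $t\in[0,1]$, at each step. Since $f^*h+t^2a^2d\psi^2+O(1/\lambda)$ stays positive definite, every intermediate map is an immersion for $\lambda$ large. Every formula is explicit in $f_p$, its derivatives and normals, and the smooth data $a,\psi,\lambda$. Hence everything stays continuous in $p$, and smooth in $x$ when the $f_p$ are smooth.

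The main obstacle I expect is Step 3 in codimension one. There one must choose Kuiper's corrugation profile and the normal field continuously in $p$, and control the $O(1/\lambda)$ error terms, which involve derivatives of $a_p$ and of $f_p$, uniformly over $B$. I would handle this by bounding every constant by a supremum over the compact set $B\times\Sigma$, and by fixing all auxiliary choices (charts, forms $\psi$, the frequency $\lambda$) independently of $p$.
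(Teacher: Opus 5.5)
Your strategy is the right one. It is the parametric Nash--Kuiper scheme (Nash spirals in codimension $\geq 2$, Kuiper corrugations in codimension one), which is essentially what the references the paper cites (Spring, in the language of convex integration; Borrelli et al.) carry out; the paper itself gives no argument beyond that citation. But Step~3 does not close as written.

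You treat the $O(1/\lambda)$ constants as fixed data and take a single frequency $\lambda$ large for all steps at once (``the frequency $\lambda$'' is fixed along with the charts and forms). In fact the constant at a given step depends on the second derivatives of the map being corrugated. For the spiral one finds
\begin{multline*}
F^*h=f^*h+a^2\,d\psi^2-\frac{2a}{\lambda}\bigl(\cos(\lambda\psi)\,\mathrm{II}^{\nu_1}+\sin(\lambda\psi)\,\mathrm{II}^{\nu_2}\bigr)\\
+\frac{2a^2}{\lambda}\,d\psi\odot\langle\nabla\nu_1,\nu_2\rangle+O(\lambda^{-2}),
\end{multline*}
where $\mathrm{II}^{\nu_i}=\langle\nabla df,\nu_i\rangle$ is the second fundamental form of $f$. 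Now suppose a previous step had amplitude $a'$ and frequency $\lambda'$. The current map then has acceleration $-a'\lambda'\bigl(\cos(\lambda'\psi')\nu_1'+\sin(\lambda'\psi')\nu_2'\bigr)\,d\psi'^2+O(1)$, which is essentially normal to the new surface. So its second fundamental form is of order $a'\lambda'$, and the next error is of order $aa'\lambda'/\lambda$. This does not tend to $0$ when all frequencies are equal. The same happens in codimension one, where Kuiper's error involves derivatives of the unit normal and of the tangent field $df(\nabla\psi)/|\nabla\psi|^2$.

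The fix is the one Nash and Kuiper use: choose the frequencies inductively, $\lambda_1\ll\lambda_2\ll\dots\ll\lambda_N$. Fix $\lambda_k$ only after the maps $f_{k-1,p}$ have been built, large compared with $\sup_{p\in B}\|f_{k-1,p}\|_{C^2}$ and with the $C^1$ norms of $a_{k,p}$ and of the normal frame. These suprema are finite because $p\mapsto f_{k-1,p}$ is continuous into $C^2$ and $B$ is compact. So your uniformity in $p$ survives; only the frequency has to become a step-dependent sequence, still independent of $p$. With this change the argument is correct.

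Two smaller points. First, the $a_{jk,p}$ are square roots, so build the decomposition from partitions of unity that are squares of smooth functions and from coefficients that are strictly positive on open simplicial cones; otherwise smoothness in $x$ can fail. Second, if the $f_p$ are only $C^1$, smooth them first, since Step~3 differentiates the normal frame; a $C^1$-small change keeps $G-f_p^*h$ positive definite.

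Finally, with $G_p=(1+\eta)g_p$ your argument gives verbatim the version with a $p$-dependent target metric. That is the version the proof of Proposition~\ref{surj} actually uses.
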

For a proof, see \cite{spring} or \cite{Borrelli}.
    \item \textbf{Brouwer's fixed point theorem:} In order to find smooth solutions for the conformal immersions relation, we do not target the holonomic solutions directly. Instead, we construct a family of immersions and conclude using a reformulation of the Brouwer fixed point theorem: 
    
    \begin{proposition}\label{prop}
Let $\psi:\overline{B(p_0,r)} \rightarrow \mathbb{R}^n$ be a continuous map from the closed euclidean ball of center $p_0 \in \mathbb{R}^n$ and radius $r>0$ that satisfies 
$$ \parallel \psi(p)-p \parallel \leq r,$$
then $p_0 \in Im(\psi)$.
\end{proposition}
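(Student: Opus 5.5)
The statement is Proposition \ref{prop}. I would deduce it directly from Brouwer's fixed point theorem by building a self-map of the closed ball $\overline{B(p_0,r)}$ whose fixed points are exactly the preimages of $p_0$ under $\psi$.

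\textbf{Step 1: the auxiliary map.} Define
$$F:\overline{B(p_0,r)}\rightarrow \mathbb{R}^n,\qquad F(p)=p_0+p-\psi(p).$$
It is continuous, since $\psi$ is.

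\textbf{Step 2: $F$ maps the ball into itself.} For every $p$ in the ball,
$$\parallel F(p)-p_0\parallel=\parallel p-\psi(p)\parallel\leq r,$$
by the hypothesis on $\psi$. Hence $F\big(\overline{B(p_0,r)}\big)\subset \overline{B(p_0,r)}$.

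\textbf{Step 3: apply Brouwer.} The closed Euclidean ball is homeomorphic to the closed unit ball, so Brouwer's theorem gives a point $p^\ast$ with $F(p^\ast)=p^\ast$. This means $p_0+p^\ast-\psi(p^\ast)=p^\ast$, so $\psi(p^\ast)=p_0$ and $p_0\in Im(\psi)$.

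There is no real obstacle here. The only points to check are that the hypothesis is the non-strict inequality $\leq r$, so $F$ lands in the \emph{closed} ball, and that the domain is compact and convex, so Brouwer applies. In the proof of Proposition \ref{surj}, the intended use is to take $\psi$ as the map sending a parameter in a ball of the finite-dimensional Teichm\"uller space to the conformal class of the induced metric of the corresponding $\varepsilon$-isometric immersion. Smallness of $\varepsilon$ should then guarantee $\parallel\psi(p)-p\parallel\leq r$, and the conclusion produces an immersion whose induced metric is conformal to $g_0$ up to isotopy.
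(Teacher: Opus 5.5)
Your proof is correct and is the paper's argument: apply Brouwer's fixed point theorem to $p\mapsto p_0+p-\psi(p)$, whose fixed points are exactly the preimages of $p_0$ under $\psi$. You also explicitly check that this map sends $\overline{B(p_0,r)}$ into itself, a step the paper leaves implicit.
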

\begin{proof}
Applying Brouwer's fixed point theorem \cite{Brouwer} to the map $p\longmapsto p_0+p-\psi(p)$, implies that there exists $p_1\in\overline{B(p_0,r)}$ such that $p_0+p_1-\psi(p_1)=p_1$, and hence, $\psi(p_1)=p_0$.
\end{proof} 
\end{itemize}

We are now ready to prove proposition \ref{surj}:

\begin{proof}[Proof of proposition \ref{surj}]
Let $[g_0]\in Teich(\Sigma)$ be the projection in the Teichm\"uller space of the Riemannian metric $g_0$, and let $B_r$ be a closed ball centered at $[g_0]$ and of radius $r$ in $Teich(\Sigma)$. We choose a continuous family of Riemannian metrics that parametrize $B_r$ (for simplicity, we use the same notation $B_r$ and identify the two sets through the projection map). Choose $\varepsilon > 0 $ such that $2\varepsilon < r$. 

Now, start with a formal solution $\sigma \in \Gamma(\R_c)$, $\sigma(x)=(x,bs\, \sigma(x), L_\sigma(x))$ such that $$
L^{*}h=\lambda g_0$$ 
for some positive function $\lambda$.

Since $B_r$ and $\Sigma$ are both compact, we can deform $\sigma$  (there is a constant $c>0$ such that $bs\, \sigma$ is short for all the metrics $cg$, $g\in B_r$) continuously into a family 
$$g\in B_r \longmapsto \sigma^{g}\in \Gamma( \Iso(g,\varepsilon))$$
such that $bs\,\sigma^{g}$ is smooth for every $g$.

Using theorem \ref{thm}, there exists a continuous map 
$$(t,g)\in [0,1]\times B_r \longmapsto \sigma^{g}_t \in \Gamma( \Iso(g,\varepsilon))$$
such that: 
\begin{itemize}
    \item $\sigma^{g}=\sigma^{g}_{0}$ for every $g$.
    \item $\sigma^{g}_{1}\in Sol(\Iso(g,\varepsilon))$ for every $g$.
    \item $bs\,\sigma^{g}_{1}$ is smooth for every $g$.
\end{itemize}

Consider now the map 
$$\psi: g\in B_r \longmapsto \pi((bs\,\sigma^{g}_{1})^{*})\in Teich(\Sigma)$$
where $\pi:\mathcal{M}et(\Sigma)\rightarrow  Teich(\Sigma)$ is the projection map.

 The map $\psi$ is obviously continuous and $\varepsilon$ was chosen small enough so that $\psi$ satisfy $$ \parallel \psi(g)-g \parallel \leq r$$
Since $Teich(\Sigma)$ is homeomorphic to $\mathbb{R}^{6d-6}$ when $\Sigma$ has genus $d\geq 2$ and is homeomorphic to the upper half plane $\mathbb{H}^2$ in the genus one case, we conclude by proposition \ref{prop}, that there exists $\widetilde{g}\in B_r$, such that $\sigma^{\widetilde{g}}_{1}\in Sol(\R_c)$ with $bs\, \sigma^{\widetilde{g}}_{1}$ smooth. 

Finally, since we started with a family of smooth immersions, all the deformations are performed inside the space of smooth immersions $Imm(\Sigma,M)$.

\end{proof}

We will now give a proof of the injectivity part of theorem \ref{T} that can be stated as follows: 
\begin{proposition}\label{inj}
For $t\in [0,1]$, let $f_t:(\Sigma,g_0) \rightarrow (M,h)$ be a continuous curve of smooth immersions such that $f_0$ and $f_1$ are conformal. Then there exists a continuous curve of smooth conformal immersions $\widetilde{f}_t :(\Sigma,g_0) \rightarrow (M,h)$ such that $\widetilde{f}_0=f_0$ and $\widetilde{f}_1=f_1$.
\end{proposition}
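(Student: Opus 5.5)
We run the surjectivity argument with one extra parameter, the time $t\in[0,1]$, and fix the two endpoints. The plan is to apply Theorem \ref{thm} to a family parametrized by $[0,1]\times B_r$. Then for each $t$ we choose, by a Brouwer-type argument, a metric $\widetilde g_t$ whose $\varepsilon$-isometric immersion is exactly conformal to $g_0$. The difficulty is that such a fixed point need not depend continuously on $t$. So we will instead use a homotopy argument on the whole parameter space.

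\textbf{Step 1 (parametric setup).} As in the proof of Proposition \ref{surj}, take a closed ball $B_r\subset Teich(\Sigma)$ around $[g_0]$, parametrized by a continuous family of metrics. Rescale by a constant $c>0$ so that every $f_t$ is short for every metric $cg$, $g\in B_r$; this uses compactness of $[0,1]\times B_r\times\Sigma$. Apply Theorem \ref{thm} to the family $(t,g)\mapsto f_t$ on the finite-dimensional ball $[0,1]\times B_r$. This gives a continuous family of smooth $\varepsilon$-isometric immersions $F_{t,g}$ for $g$, joined to $f_t$ through smooth immersions. Define
$$\Psi(t,g)=\pi(F_{t,g}^*h)\in Teich(\Sigma).$$
For $\varepsilon$ small, $\|\Psi(t,g)-g\|<r$ on $[0,1]\times\partial B_r$.

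\textbf{Step 2 (connecting the endpoints to the family).} At $t=0$ the conformal immersion $f_0$ must be joined to some $F_{0,g}$ with $\Psi(0,g)=[g_0]$, through conformal immersions; the same holds at $t=1$. Rather than rely on the zero set, we will arrange the endpoints beforehand. Replace the family by one that at $t=0$ (and $t=1$) runs inside conformal immersions: the family $\sqrt{\mu}f_0$-type rescalings are unavailable in $M$, so instead we use the fact that $f_0$ is $\lambda g_0$-isometric. We rerun Nash–Kuiper relative to the boundary $\{0,1\}\times B_r$, with the slice at $g=g_0$ fixed. Concretely, we would modify the metrics by a conformal factor so that $f_0$ itself is $g_0$-isometric after identifying $g_0$ with $\lambda g_0$, and invoke the relative version of Theorem \ref{thm}, which keeps $f_0$, $f_1$ fixed since they are already isometric for the chosen parametrizing metric at $[g_0]$.

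\textbf{Step 3 (choosing a path in the zero set).} Consider $Z=\Psi^{-1}([g_0])\subset[0,1]\times B_r$. It contains $(0,g_0)$ and $(1,g_0)$. By a degree argument, $\Psi(t,\cdot)$ has degree one onto a neighborhood of $[g_0]$ for each $t$, using Proposition \ref{prop} and the boundary estimate. A degree/Brouwer argument, made generic by perturbing $\Psi$ (e.g.\ making it smooth and transverse to $[g_0]$), then shows that $Z$ contains a $1$-dimensional component joining $\{0\}\times B_r$ to $\{1\}\times B_r$, namely a compact $1$-manifold of mod-2 degree one. If genericity is achieved, the component starting at $(0,g_0)$ is an arc ending on $\{1\}\times B_r$. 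Following this arc $s\mapsto(t(s),g(s))$ gives the path $\widetilde f_s=F_{t(s),g(s)}$ of smooth conformal immersions. Finally, at $t=1$ the endpoint must be joined back to $f_1$, using the fact that $Z\cap\{1\}\times B_r$ is forced to be $\{g_0\}$ by Step 2.

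\textbf{Main obstacle.} The hardest step is Step 3. Making $\Psi$ transverse requires a smooth dependence on $g$ that Theorem \ref{thm} does not provide, and perturbing $\Psi$ destroys the exact conformality of the $F_{t,g}$. The first thing we would try is a different route: perturb the parametrization of $B_r$ instead of $\Psi$. Alternatively, we would replace the arc by a continuum-connectedness argument in the spirit of Leray–Schauder, which shows that $Z$ has a connected component meeting both ends. The remaining issue is that a connected compact set need not be path connected. We would then try to upgrade connectedness to path-connectedness by approximation: apply Theorem \ref{thm} with a sequence $\varepsilon_n\to0$, or use local contractibility of the space of conformal immersions near a given one.
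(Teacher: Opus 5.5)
Your skeleton (Theorem \ref{thm} on $[0,1]\times B_r$, then a component of $Z=\Psi^{-1}([g_0])$ crossing from $t=0$ to $t=1$) matches the paper. But the proposal stops where the real difficulty lies, and the two devices that close it in the paper are missing.

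\textbf{From a connected set to a path.} The Leray--Schauder/Browder continuum argument does give a connected component of $Z$ projecting onto $[0,1]$; this is exactly the paper's use of Browder's theorem. But none of your remedies turns it into a path.
\begin{itemize}
\item Transversality fails for the reasons you give yourself.
\item Running Theorem \ref{thm} with $\varepsilon_n\to 0$ yields a sequence of continua, not a path.
\item Local path-connectedness of the space of smooth conformal immersions would suffice, since a connected set maps into a single path component. However, it is not available: it is essentially a local form of the proposition itself, and the implicit-function approach breaks down where $Imm(\Sigma,M)\to Teich(\Sigma)$ is not submersive, e.g.\ at isothermic immersions in $\mathbb{E}^3$.
\end{itemize}
The paper instead changes the family before taking the zero set. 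By Royden's approximation theorem it replaces $(t,g)\mapsto$ (immersion) with a map that is real analytic in $(t,g)$, which preserves the open conditions (immersion, $\varepsilon$-isometry, the Brouwer estimate). Then $\psi$ is analytic, the Browder component is a connected analytic set, and connected analytic sets are path-connected.

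\textbf{The endpoints.} Your Step 2 needs a relative Nash--Kuiper statement fixing $f_0$ at the single parameter $(0,g_0)$. Theorem \ref{thm} does not provide this, and it clashes with your rescaling by $c$. More seriously, even granting $F_{0,g_0}=f_0$ and $F_{1,g_0}=f_1$, nothing forces $Z\cap(\{1\}\times B_r)=\{(1,g_0)\}$: $\Psi(1,g)=[g_0]$ can also hold for other $g$ near $g_0$. So the crossing component need not pass through $(0,g_0)$ or $(1,g_0)$. Likewise, in your generic picture the arc from $(0,g_0)$ may return to $\{0\}\times B_r$, since a mod-$2$ count only guarantees that some arc crosses. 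You would then have to join $f_0$ to some $F_{0,g'}$ through conformal immersions, which is the original problem again.

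The paper avoids this by making the family independent of $g$ at the ends: $bs(\sigma^g_0)=f_0$ and $bs(\sigma^g_1)=f_1$ for all $g\in B_r$. Then $\psi\equiv[g_0]$ on $\{0\}\times B_r$ and $\{1\}\times B_r$, and the estimate $\|\psi(t,g)-g\|\le r$ holds there trivially. Both end slices therefore lie entirely in the zero set, so any component reaching $t=0$ and $t=1$ automatically contains $(0,g_0)$ and $(1,g_0)$.
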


Notice that, we can not apply the same proof as before, since there is no parametric version of Brouwer's fixed point theorem. More precisely, for a continuous curve of continuous (even smooth) maps from the closed ball to itself, one can not guarantee the existence of a continuous curve of fixed points.
\begin{example}

Let 
\[
\gamma: t \in (0,1] \mapsto \tfrac{1}{2}\sin\left(\tfrac{1}{t}\right)[-1/2, 1/2]
\]
and let \( D \) be the unit disk in \( \mathbb{R}^2 \). For each \( t \in (0,1] \), consider the unique isometry \( \varphi_t \) of the Poincaré disk with fixed point \( \gamma(t) \) and rotation number \( \tfrac{t\pi}{2} \). For \( t = 0 \), define $ \varphi_0 = \mathrm{id}_D $, the identity map on the disk.
The map
\[
\psi: (t,x) \in [0,1] \times D \longmapsto \varphi_t(x) \in D
\]
is continuous, and the set of fixed points is given by
\[
Fix_\psi = \{(t, \gamma(t)) \;|\; t > 0\} \cup \{(0, x) \;|\; x \in D\}.
\]
which is not path-connected, and therefore there exists no section \( [0,1] \to Fix_\psi \).
\end{example}

However, there is a weaker parametric version of Brouwer's fixed point theorem:  

\begin{theorem}[Browder's fixed point theorem \cite{Browder}]
Let $X\subset \mathbb{R}^n$ be a compact and convex subset, and let $ \psi:[0,1]\times X \rightarrow X $
 be a continuous map. Then, the set $\{(t,x)\in[0,1]\times X \;:\; \psi(t,x)=x\}$
 has a connected component which projects surjectively into $[0,1]$.
\end{theorem}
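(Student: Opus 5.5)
The plan is to approximate $\psi$ by generic smooth maps, use transversality and degree theory to find, for each approximation, an arc of fixed points running from $t=0$ to $t=1$, and then pass to a limit of these arcs.

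\textbf{Reduction to a ball.} First I reduce to the case where $X$ is a closed Euclidean ball $D$. Choose a large closed ball $D \supset X$ and a retraction $\rho : D \to X$; this exists because $X$ is compact and convex, for instance the nearest-point projection. Replace $\psi$ by $\Psi(t,x)=\psi(t,\rho(x))$.
\begin{itemize}
\item The map $\Psi$ takes values in $X\subset D$.
\item Any fixed point $(t,x)$ of $\Psi$ satisfies $x=\Psi(t,x)\in X$, so $\rho(x)=x$ and $x$ is a fixed point of $\psi(t,\cdot)$.
\item Conversely, every fixed point of $\psi$ is a fixed point of $\Psi$.
\end{itemize}
Hence the two fixed point sets coincide. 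After shrinking $X$ slightly into the interior of $D$, or enlarging $D$, I may assume $\Psi$ maps into the interior of $D$. Write $F\subset[0,1]\times D$ for the fixed set; it is compact.

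\textbf{Smooth generic approximations.} For each $k$, take a smooth map $\psi_k:[0,1]\times D\to \operatorname{int} D$ with $\|\psi_k-\Psi\|_{C^0}<1/k$. By a further small perturbation, using the parametric transversality theorem, I arrange that $0$ is a regular value of $\Phi_k(t,x)=x-\psi_k(t,x)$ on $[0,1]\times D$, and also of its restrictions to the slices $t=0$ and $t=1$. Since $\psi_k$ misses $\partial D$, $\Phi_k$ has no zeros with $x\in\partial D$. Therefore $Z_k=\Phi_k^{-1}(0)$ is a compact $1$-manifold whose boundary is exactly $Z_k\cap(\{0,1\}\times D)$. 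Each component of $Z_k$ is thus a circle or an arc with endpoints in $\{0,1\}\times D$.

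\textbf{Existence of a crossing arc.} This degree-theoretic step is the one I expect to need the most care, mainly the orientation bookkeeping.
\begin{itemize}
\item The map $x\mapsto \Phi_k(0,x)$ is homotopic to the identity through $x-s\psi_k(0,x)$, $s\in[0,1]$. None of these maps vanishes on $\partial D$, since $\psi_k(0,x)$ lies in the interior of $D$.
\item Hence the local degree at $0$ equals $1$, so the signed count of points of $Z_k\cap(\{0\}\times D)$ is $1$.
\item Orient $Z_k$ as the preimage of a regular value. An arc with both endpoints in $\{0\}\times D$ contributes endpoints of opposite sign, so such arcs contribute $0$ to the count.
\item Therefore at least one arc $C_k\subset Z_k$ starts in $\{0\}\times D$ and ends in $\{1\}\times D$. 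Its projection to $[0,1]$ is connected and contains $0$ and $1$, so it is all of $[0,1]$.
\end{itemize}

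\textbf{Passing to the limit.} The sets $C_k$ are compact and connected in the compact metric space $[0,1]\times D$. By Blaschke's theorem, a subsequence converges in the Hausdorff metric to a compact set $C$. A Hausdorff limit of compact connected sets is connected, so $C$ is connected. Points of $C$ are limits of points $(t_k,x_k)$ with $x_k=\psi_k(t_k,x_k)$, and $\psi_k\to\Psi$ uniformly, so $C\subset F$. Moreover $C$ still projects onto $[0,1]$. The connected component of $F$ containing $C$ is then the required component, and it projects surjectively onto $[0,1]$.
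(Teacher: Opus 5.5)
Your proof is correct. It necessarily takes a different route from the paper, which gives no argument for this statement and simply cites Browder \cite{Browder}.

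Your argument is self-contained and differential-topological, in four steps:
\begin{enumerate}
\item Reduce to a ball via the nearest-point retraction, which makes the two fixed-point sets literally equal.
\item Approximate by smooth maps for which $0$ is a regular value of $x-\psi_k(t,x)$ and of its restrictions to $t=0,1$.
\item Use degree $1$ at $t=0$ to force a component of $Z_k$ running from $t=0$ to $t=1$.
\item Pass to a Hausdorff limit of these arcs, which is a connected subset of the fixed set projecting onto $[0,1]$.
\end{enumerate}

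Two small repairs are needed. First, the homotopy $x-s\psi_k(0,x)$ is free of zeros on $\partial D$ only if $0\in\operatorname{int}D$, so centre $D$ at the origin. Second, you should only enlarge $D$; shrinking $X$ changes the problem.

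The orientation bookkeeping you flagged can be avoided altogether. Degree $1$ makes $\#\bigl(Z_k\cap(\{0\}\times D)\bigr)$ odd. Circles miss the boundary, and arcs returning to $t=0$ use up an even number of these points. So some arc must end at $t=1$.

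Beyond self-containment, your route shows why only a connected component, and not a path, can be asserted. Each $C_k$ is an honest arc of fixed points, but the limit $C$ is merely a continuum. It may fail to be path-connected, exactly as in the $\sin(1/t)$ example preceding the theorem. This is the gap the paper must close separately, via analyticity, in the proof of Proposition \ref{inj}.
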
 
Using this theorem, along with the fact that connected real analytic sets are path-connected, leads to proposition \ref{inj}. 
\begin{proof}[Proof of proposition \ref{inj}]
Starting from a curve of smooth immersions $f_t: \Sigma \rightarrow M$, where $f_0$ and $f_1$ are conformal for $g_0$, we can construct, as in the proof of proposition \ref{surj}, a homotopy in the space of immersions (see Theorem \ref{thm}) to a smooth family: 
$$H_0: (t,g) \in [0,1] \times B_r \mapsto \sigma^{g}_t \in Imm(\Sigma,M)$$
such that: 
\begin{itemize}
    \item $bs(\sigma^{g}_0)=f_0$ and $bs(\sigma^{g}_1)=f_1$ for every $g$.
    \item $\sigma^{g}_{1}\in Sol(\Iso(g,\varepsilon))$ for every $g$.
    \item $bs(\sigma^{g}_t)$ is smooth for every $g$ and $t$.
\end{itemize}

By Royden's analytic approximation theorem \cite{Royden}, there exists a homotopy $\{H_s\}_{s\in[0,1]}$ connecting $H_0$ to a map: 
$$ H_1: (t,g) \in [0,1] \times B_r \mapsto \widetilde{\sigma}^{g}_t \in Imm(\Sigma,M)$$
where: 
\begin{itemize}
    \item $bs(\widetilde{\sigma}^{g}_0)=f_0$ and $bs(\widetilde{\sigma}^g_1)=f_1$ for every $g$.
    \item $\widetilde{\sigma}^{g}_{1}\in Sol(\Iso(g,\varepsilon))$ for every $g$.
    \item $H_1$ is analytic with respect to $t$ and $g$ on $]0,1[\times B_{r}$.
\end{itemize}
We consider now the map: 
$$\psi: (t,g) \in [0,1] \times B_r \longmapsto \pi((bs(\widetilde{\sigma}^{g}_{1}))^{*}h)\in Teich(\Sigma) $$
Using Browder's theorem reformulated as in proposition \ref{prop}, the set $\psi^{-1}(g_0)$ has a connected component that projects surjectively into $[0,1]$. Moreover, since $\psi$
 is analytic on $]0,1[\times B_{r}$ and constant on $\{0\}\times B_r$ and on $\{1\}\times B_r$, the set $\psi^{-1}(g_0)$ is path connected \cite{analyticconnected}.

\end{proof}

\newpage
\bibliographystyle{plain}
\bibliography{Bib.bib}
\end{document}